\newtheorem{theorem}{Theorem}
\newtheorem{definition}{Definition}[section]
\newtheorem{lemma}{Lemma}[section]
\numberwithin{equation}{section}
\begin{document}

\title{Finite speed of propagation for the thin film equation in spherical geometry}

\author{Roman M. Taranets \\
 Institute of Applied Mathematics and Mechanics of the NASU,\\
 Sloviansk, Ukraine, taranets\_r@yahoo.com}

\maketitle
\begin{abstract}
We show that a double degenerate thin film equation, which originated from modeling of viscous
coating flow on a spherical surface,  has finite speed of propagation for nonnegative strong
solutions and hence there exists an interface or free boundary separating the regions where solution $u>0$
and $u=0$. Using local entropy estimates we also obtain an upper bound for the rate of the interface propagation.
\end{abstract}

\textbf{UDC} 517.953

\textbf{MSC2010}: 35K35, 35K55, 35K65, 35B45, 35B65

\section{Introduction}

In this paper, we study a particular case of the following doubly degenerate fourth-order parabolic equation
\begin{equation}\label{eq:physical eqn}
u_{t}+ [u^{n} (1-x^{2} ) (a - b x + c (2u + ((1-x^{2})u_{x} )_{x} )_{x} ) ]_{x}=0 \text{ in } Q_T,
\end{equation}
where $u(x,t)$ represents the thickness of the thin film, the dimensionless parameters $a$, $b$ and $c$ describe
the effects of gravity, rotation and surface tension, $Q_T = \Omega \times(0,T )$,
$n > 0$, $T > 0$, and $\Omega=(-1,1)$. For $n=3$ (no-slip regime) this equation describes
the dynamics of a thin viscous liquid film on the
outer surface of a solid sphere. For $n = 2$  the classical Navier slip condition
is recovered. On the other hand, parameter ranges  $n \in (0,2)$ ($n \in (2,3)$) in the equation (\ref{eq:physical eqn}) correspond
to strong (weak) wetting slip regimes. More general dynamics of the liquid film for the case
when the draining of the film due to gravity was balanced by centrifugal
forces arising from the rotation of the sphere about a vertical axis and by capillary forces due
to surface tension was considered in \cite{kangcoatingsphere}. In addition, Marangoni
effects due to temperature gradients were taken into account in \cite{KNC-17}.
The spherical model without the surface tension and Marangoni effects was studied
in \cite{TH-10,Wil-94}.

We are interested in time evolution of
the support of non-negative strong solutions to
\begin{equation}\label{eq:main eqn}
u_{t}+\left( (1-x^{2} ) |u|^{n}( (1-x^{2} )u_{x} )_{xx}\right)_{x}=0.
\end{equation}
Equation (\ref{eq:main eqn}) is a particular case of (\ref{eq:physical eqn})
with $a = b =0$ with an absence of the second-order diffusion term. Existence of weak solutions for (\ref{eq:main eqn}) in a weighted
Sobolev space was shown in \cite{KSZ-16} and  existence of more regular non-negative strong solutions
of (\ref{eq:main eqn}) was recently proved in \cite{Tar-17}.
Unlike the classical thin film equation
\begin{equation}\label{eq:Bernis eqn}
u_{t}+\left(|u|^{n}u_{xxx}\right)_{x}=0,
\end{equation}
the qualitative behavior of solutions for double degenerate thin-film equation (\ref{eq:main eqn}) is still not well understood.
Note that the model equation (\ref{eq:Bernis eqn}) describes the coating flow of
a thin viscous film on a flat surface under the surface tension effect.
Depending on the value of the parameter $n$ non-negative solutions of this equation posses some
properties. For example, in 1990, Bernis and Friedman \cite{bernis1990higher}
defined and constructed non-negative weak solutions of the equation (\ref{eq:Bernis eqn}) when
$n \geqslant 1$, and it was also shown that for $n\geqslant 4$, with a uniformly positive
initial condition, there exists a unique positive classical solution.
Later on, in 1994, Bertozzi et al. \cite{bertozzi1994singularities} generalised this positivity
property  for the case $n \geqslant \frac{7}{2}$. In 1995, Beretta et al. \cite{beretta1995nonnegative}
proved the existence of non-negative weak solutions for the equation (\ref{eq:Bernis eqn}) if
$n > 0$, and the existence of strong ones for $0< n < 3$. Also, they could show that
this positivity-preserving property holds at almost every
time $t$ in the case $n \geqslant 2$.  This positivity-preservation result was generalized for a cylindrical
surface was obtained in \cite{Ch10}. Furthermore, for $n \geqslant \frac{3}{2}$
the solution's support to (\ref{eq:Bernis eqn}) is non-decreasing in time, and the support remains
constant if $n \geqslant 4$. The existence (nonexistence) of compactly supported
spreading source type solution to (\ref{eq:Bernis eqn}) was demonstrated
for $0 < n < 3$ $(n \geqslant 3)$ in \cite{BPW-92}. One of interesting
qualitative properties of non-linear parabolic thin film equations is finite speed
of support propagation that is not the case when the parabolic equation is a linear one.
This property was first shown in \cite{B6} if $0 < n < 2$, and in  \cite{B7,HulShish}
if $2 \leqslant n < 3$ for non-negative strong solutions of  (\ref{eq:Bernis eqn}).
A similar result on a cylindrical surface was obtained in \cite{CT-12}.


Our main result for the thin film equation on the spherical surface is the finite speed of the
interface propagation in the special case of the strong slip regime $n \in (1,2)$.
Proof of the finite speed of propagation property is based on local entropy estimate
and Stampacchia's lemma. Moreover, we obtain an upper bound the time evolution of the support as:
$\Gamma(t) \leqslant C_0 t^{\frac{1}{n +4}}$. This bound coincides with  the asymptotic
behaviour of self-similar type solutions to (\ref{eq:Bernis eqn})
(see \cite{BPW-92}).

\section{Main result}

We study the following thin film equation
\begin{equation}\label{A-1}
u_{t}+\left((1-x^{2}) |u|^{n}\left((1-x^{2})u_{x}\right)_{xx}\right)_{x}=0 \text{ in } Q_T
\end{equation}
with the no-flux boundary conditions
\begin{equation}\label{A-2}
(1-x^{2})u_{x}=(1-x^{2})\left((1-x^{2})u_{x}\right)_{xx}=0 \text{ at } x=\pm1,\, t > 0,
\end{equation}
and the initial condition
\begin{equation}\label{A-3}
u(x,0)=u_{0}(x).
\end{equation}
Here $n > 0$, $Q_T = \Omega \times (0,T)$, $\Omega :=(-1,1)$,  and $T >0$.
Integrating the equation (\ref{A-1}) by using boundary conditions (\ref{A-2}), we obtain the mass conservation property
\begin{equation}\label{mass-con}
\int \limits_{\Omega}{ u(x,t) dx} = \int \limits_{\Omega}{ u_0(x ) dx} =: M > 0.
\end{equation}
Consider  initial data $u_0(x) \geqslant 0$ for all $x \in \bar{\Omega}$ satisfying
\begin{equation}\label{A-4}
\int \limits_{\Omega}{ \{  u^2_0(x) + (1-x^2) u^2_{0,x} (x)\} dx} < \infty  .
\end{equation}

\begin{definition}\label{def}[weak solution]
Let $n > 0$. A  function $u$ is a weak solution of
the problem (\ref{A-1})--(\ref{A-3}) with initial data $u_0$ satisfying (\ref{A-4})
if   $u(x,t)$ 
has the following properties
$$
(1-x^2)^{\frac{\beta}{2}} u \in C_{x,t}^{\frac{\alpha}{2}, \frac{\alpha}{8}}(\bar{Q}_T),
\ 0 < \alpha < \beta \leqslant \tfrac{2}{n} ,
$$
$$
u_t \in L^2(0,T; (H^1(\Omega))^*), \ (1-x^2)^{\frac{1}{2}}  u_x  \in L^{\infty}(0,T; L^2(\Omega)),
$$
$$
(1-x^2)^{\frac{1}{2}} |u|^{\frac{n}{2}} ( (1-x^2)u_x )_{xx}   \in L^2(P),
$$
$u$ satisfies (\ref{A-1}) in week sense:
$$
\int \limits_{0}^T {\langle u_t , \phi \rangle \,dt} -
\iint \limits_{P} { (1-x^2) |u|^{n} ( (1-x^2)u_x )_{xx} \phi_{x} \,dx dt}\\
 =0
$$
for all $\phi \in L^2(0,T; H^1(\Omega))$, where $P : = \bar{Q}_T \setminus
\{ \{u = 0\} \cup \{t =0\}\} $,
$$
(1-x^2)^{\frac{1}{2}} u_x(., t) \to (1-x^2)^{\frac{1}{2}} u_{0,x}(.)
\text{ strongly in } L^2(\Omega) \text{ as } t \to 0,
$$
and boundary conditions (\ref{A-2}) hold at all points of the lateral boundary, where
$\{u \neq 0\}$.
\end{definition}

Let us denote by
\begin{equation}\label{G_0-def}
0 \leqslant G_{0}(z): =   \begin{cases} \tfrac{ z^{2 -n} - A^{2 -n} }{(n-1)(n
- 2)} - \tfrac{A^{1-n}}{1 -n}(z - A) \text{
if } n \neq   1, 2  , \\
z \ln z - z(\ln A + 1) +A \text{ if }
n = 1 , \\
\ln (\tfrac{A}{z}) + \tfrac{z}{A} - 1 \text{ if } n = 2,
\end{cases}
\end{equation}
where $A = 0$ if $n \in (1,2)$ and $A > 0$ if else.

\begin{theorem}[strong solution]\label{Th1}
Assume that $n \geqslant 1$ and initial data $u_0$ satisfies $\int \limits_{\Omega}{G_0(u_0) \,dx} < +\infty $ then the problem (\ref{A-1})--(\ref{A-3}) has a non-negative weak solution, $u$, in the sense of Definition~\ref{def},
such that
$$
(1-x^2)u_x  \in L^2(0,T;H^1(\Omega)), \ (1-x^2)^{\frac{\gamma}{2}}u_x  \in L^2(Q_T), \ \gamma \in (0,1],
$$
$$
u   \in L^{\infty}(0,T; L^2(\Omega)),\ (1-x^2)^{\frac{\mu}{2}}u   \in L^2(Q_T),  \  \mu \in (-1, \beta].
$$
\end{theorem}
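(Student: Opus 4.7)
The plan is to adapt the Bernis--Friedman regularization scheme to the doubly-degenerate geometry of \eqref{A-1}, mirroring the strong-solution construction of \cite{Tar-17}. I would replace $|u|^n$ by a non-degenerate approximation such as $f_\varepsilon(u) = (u^2 + \varepsilon^2)^{n/2}$ and lift the initial datum to $u_0 + \delta$; standard quasilinear parabolic theory then produces smooth strictly positive approximants $u_{\varepsilon,\delta}$ on $Q_T$ satisfying the no-flux conditions \eqref{A-2}. The remainder of the argument is to derive uniform-in-$(\varepsilon,\delta)$ a priori estimates, pass to the limit, and verify each of the listed regularity assertions.

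Two integral estimates supply all the required bounds. The \emph{energy} estimate comes from testing the regularized equation against $-((1-x^2)u_x)_x$, which is admissible because $(1-x^2)u_x$ vanishes at $x=\pm 1$ by \eqref{A-2}; two integrations by parts yield
\begin{equation*}
\tfrac{1}{2}\tfrac{d}{dt}\!\int_\Omega (1-x^2)u_x^2\,dx + \int_\Omega (1-x^2)\,f_\varepsilon(u)\bigl(((1-x^2)u_x)_{xx}\bigr)^2 dx = 0,
\end{equation*}
which produces the $L^\infty(0,T;L^2)$ control on $(1-x^2)^{1/2}u_x$ and the weighted second-order $L^2(P)$ bound in Definition~\ref{def}. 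The \emph{entropy} estimate comes from testing with $G_0'(u)$; since $G_0''(z)=z^{-n}$, so that $G_0''(u)f_\varepsilon(u)\to 1$ on $\{u>0\}$, one further integration by parts using \eqref{A-2} reduces the dissipation to
\begin{equation*}
\tfrac{d}{dt}\!\int_\Omega G_0(u)\,dx + \int_\Omega \bigl(((1-x^2)u_x)_x\bigr)^2 dx = o_\varepsilon(1),
\end{equation*}
which delivers $G_0(u)\in L^\infty(0,T;L^1)$ together with the crucial $(1-x^2)u_x \in L^2(0,T;H^1(\Omega))$.

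The remaining weighted norms $(1-x^2)^{\gamma/2}u_x \in L^2(Q_T)$ for $\gamma\in(0,1]$ and $(1-x^2)^{\mu/2}u \in L^2(Q_T)$ for $\mu\in(-1,\beta]$ follow by interpolation of the two main estimates with mass conservation \eqref{mass-con}, combined with weighted Hardy inequalities on $(-1,1)$ which are valid precisely in the stated open ranges of $\mu$ and $\gamma$. The bound $u_t\in L^2(0,T;(H^1(\Omega))^*)$ reads off from the weak formulation, and together with the spatial estimates provides Aubin--Lions compactness sufficient to pass to the limit in the nonlinear flux on $\{u>0\}$; the weighted H\"older continuity of $(1-x^2)^{\beta/2}u$ is inherited from the uniform estimates of \cite{Tar-17}. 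Non-negativity of $u$ is preserved throughout the approximation, and the finiteness of $\int G_0(u)$ serves to identify the limit as a distributional solution in the region of positivity.

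The principal obstacle is the bookkeeping imposed by the degenerate weight $(1-x^2)$. Every candidate test function must lie in the correct weighted Sobolev class, each integration by parts must be justified against \eqref{A-2}, and the entropy identity $f_\varepsilon(u)G_0''(u)=1$ holds only asymptotically, so the true dissipation decomposes into a non-negative term plus an error that must be shown to vanish uniformly as $\varepsilon,\delta\to 0$. These technical points lie at the heart of \cite{Tar-17}; the role of the present theorem is to repackage those estimates in precisely the weighted norms used by the finite-speed-of-propagation argument that follows.
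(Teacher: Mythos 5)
Note first that the paper does not prove Theorem~\ref{Th1} itself: it is quoted from \cite{Tar-17}, and the only place where the construction is visible here is the regularization \eqref{eq:regularized eqn}--\eqref{reg-3} used in the proof of Lemma~\ref{local_entr}. Your overall strategy (regularize, derive the energy identity by testing with $-((1-x^2)u_x)_x$ and the entropy identity by testing with $G_0'(u)$, then pass to the limit) is the right one and matches that scheme in spirit, and your two formal identities are correct. The genuine gap is in the very first step: you regularize only the degeneracy in $u$ (replacing $|u|^n$ by $(u^2+\varepsilon^2)^{n/2}$ and lifting $u_0$), but you leave the geometric weight $(1-x^2)$ untouched. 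The equation then still degenerates at $x=\pm1$ no matter how positive $u$ is --- the coefficient of the fourth-order term vanishes like $(1-x^2)^2$ at the endpoints --- so ``standard quasilinear parabolic theory'' does not produce approximants that are smooth up to the boundary and satisfy \eqref{A-2} classically, and consequently none of the boundary terms you discard in the integrations by parts is justified at this stage. This is exactly why the paper's scheme \eqref{eq:regularized eqn} replaces $(1-x^2)$ by $(1-x^2+\delta)$ (making the problem uniformly parabolic up to the boundary, as the paper states explicitly), couples the initial lift to the mobility regularization through $\epsilon^{\theta}$ with $\theta\in(0,\tfrac{1}{2(n-1)})$ as in \eqref{reg-3-0}, and takes the limits in the order $\epsilon\to0$ first, $\delta\to0$ second, with the compatibility conditions \eqref{reg-3-1}--\eqref{reg-3}. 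Your single uncoupled lift $u_0+\delta$ does not by itself guarantee that the entropy of the regularized data stays bounded relative to $\varepsilon$ nor that positivity is preserved long enough for $G_0'(u)$ to be an admissible test function.

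A secondary, smaller issue: the weighted bounds $(1-x^2)^{\mu/2}u\in L^2(Q_T)$ for $\mu$ down to $-1$ and $(1-x^2)^{\gamma/2}u_x\in L^2(Q_T)$ do not follow from interpolation of the two main estimates with mass conservation alone; negative powers of the weight require a Hardy-type inequality applied to the quantity $(1-x^2)u_x\in L^2(0,T;H^1(\Omega))$ produced by the entropy dissipation, and you only assert this. Since these are precisely the norms the finite-speed argument consumes (they make the right-hand side of \eqref{entr-07} finite), they deserve an explicit derivation rather than a one-line appeal to ``weighted Hardy inequalities valid precisely in the stated ranges.''
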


The existence of these solutions was proved in \cite{Tar-17}.
Our aim is to establish the finite speed of propagation property for a strong solution
$u$ of (\ref{A-1})  in  the sense of Theorem~\ref{Th1}.

\begin{theorem}[finite speed of propagation]\label{Th2}
Assume that $1 < n < 2$, the initial data satisfies the hypotheses of Theorem
\ref{Th1}  and the support of the initial data satisfies
$\mbox{supp}(h_0) \subset \Omega \setminus (-r_0,r_0)$, where $\Omega = (-1,1)$ and $r_0 \in (0,1)$. Let
$h$ be the strong solution from Theorem \ref{Th1}.
Then there exists a time $T^* > 0$ and a nondecreasing function $\Gamma(t) \in C([0,T^*])$,
$\Gamma(0) = 0$ such that $u$ has finite speed propagation, i.\,e.
$$
\mbox{supp}(u(\cdot,t)) \subseteq [-r_0 + \Gamma(t), r_0 - \Gamma(t)] \subset
\Omega
$$
for all $t \in [0,T^*]$. Moreover,  $\Gamma_{opt}(t)= C_0 t^{\frac{1}{n + 4}}$ for all $t \in [0,T^*]$.
\end{theorem}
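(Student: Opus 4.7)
The author's own guidance, ``local entropy estimate and Stampacchia's lemma,'' fixes the architecture. A first observation is that since $\supp(u_0)\subset\Omega\setminus(-r_0,r_0)$ with $r_0<1$, the whole argument takes place on a compact subinterval of $(-1,1)$ on which the spherical weights $(1-x^2)$ are bounded above and below by positive constants; locally the equation therefore behaves like a non-degenerate (in $x$) thin-film equation, so the strategies of \cite{B6,HulShish} can be adapted, with the sphericity affecting only the constants.

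I would begin with the entropy: in \eqref{G_0-def} take $A=0$ so that $G_0(z)=z^{2-n}/[(n-1)(n-2)]$ and $G_0''(z)=z^{-n}$. For $0<s_2<s_1<r_0$ choose a cutoff $\zeta\in C^{\infty}_{c}((-s_1,s_1))$ with $\zeta\equiv 1$ on $[-s_2,s_2]$, $0\le\zeta\le 1$, and $|\zeta^{(k)}|\le C(s_1-s_2)^{-k}$. Testing \eqref{A-1} with $G_0'(u)\zeta^{2p}$ for a sufficiently large integer $p$, the identity $u^n G_0''(u)=1$ collapses the singular factor, and integration by parts produces the principal dissipation
$$
\int_{\Omega}(1-x^2)\bigl|((1-x^2)u_x)_x\bigr|^{2}\zeta^{2p}\,dx,
$$
with all commutators involving $\zeta_x,\zeta_{xx},\ldots$ supported in the thin strip $s_2<|x|<s_1$. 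A Cauchy--Schwarz/Young reorganization absorbs the highest-order commutators into the dissipation and leaves a local entropy inequality
$$
\frac{d}{dt}\int G_0(u)\zeta^{2p}\,dx + c\int (1-x^2)\bigl|((1-x^2)u_x)_x\bigr|^{2}\zeta^{2p}\,dx \le \frac{C}{(s_1-s_2)^{q}}\int_{s_2<|x|<s_1}F[u]\,dx,
$$
where $F[u]$ is a lower-order functional built from $u^{2-n}$, $u^{2}$, and $(1-x^2)u_x^{2}$.

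For the iteration, set
$$
b(s):=\sup_{t\in[0,T^*]}\int_{|x|<s}G_0(u(\cdot,t))\,dx+\int_{0}^{T^*}\int_{|x|<s}(1-x^2)\bigl|((1-x^2)u_x)_x\bigr|^{2}\,dx\,dt.
$$
Integrating the local entropy inequality on $[0,T^*]$ and using $\int G_0(u_0)\zeta^{2p}\,dx=0$ (from the support hypothesis on $u_0$), an interpolation/Sobolev estimate on the strip bounds the right-hand side by $C(T^*)^{\sigma}(s_1-s_2)^{-q}\,b(s_1)^{\theta}$ for some $\theta>1$ and $\sigma,q>0$ depending only on $n$. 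This is exactly the hypothesis of Stampacchia's iteration lemma, whose conclusion is that $b\equiv 0$ on $[0,r_0-\Gamma(T^*)]$. Balancing the powers of $T^*$ and $s_1-s_2$ in the resulting algebraic inequality yields $\Gamma(T^*)=C_0\,(T^*)^{1/(n+4)}$, matching the Barenblatt-type scaling of \cite{BPW-92}.

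The main obstacle is \emph{justifying the test function} $G_0'(u)\zeta^{2p}$: since $G_0'(u)\sim u^{1-n}$ blows up on $\{u=0\}$, the identity must first be proved for the positive approximating solutions $u_\varepsilon$ from \cite{Tar-17}, with bounds uniform in $\varepsilon$ coming from the already-available global entropy control, and then passed to the limit $\varepsilon\downarrow 0$ using the strong convergence asserted in Theorem~\ref{Th1}. A secondary point is the choice of $p$: it must be large enough that every commutator produced when $\zeta^{2p}$ crosses the fourth-order operator remains integrable against the $u^{-n}$-type weight, while the exponent arithmetic still gives $\theta>1$ in the Stampacchia step. Both conditions are compatible precisely in the strong-slip range $1<n<2$, which explains the hypothesis of the theorem.
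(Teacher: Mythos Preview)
Your architecture is the paper's: local entropy estimate for $G_0(u)=\tfrac{u^{2-n}}{(n-1)(n-2)}$, cut off in $|x|<s$, followed by a Stampacchia-type iteration. You also correctly flag that the test $G_0'(u)\zeta^{2p}$ must be justified on the regularised approximants $u_{\epsilon\delta}$ of \cite{Tar-17} before passing to the limit, exactly as the paper does in Lemma~\ref{local_entr}. Two points, however, need correction.

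\medskip
\noindent\textbf{The main gap: the rate $t^{1/(n+4)}$ does not fall out of the first iteration.} Your last sentence before the ``main obstacle'' paragraph asserts that balancing powers in the Stampacchia inequality already gives $\Gamma(T)=C_0T^{1/(n+4)}$. It does not. Running the entropy estimate and the Nirenberg interpolation with the entropy norm $\|u\|_{L^{2-n}}$ as the low norm (the only a~priori quantity your functional $b(s)$ controls) yields, after the iteration, the \emph{suboptimal} exponent
\[
\Gamma(T)\ \lesssim\ T^{\frac{2-n}{8-3n}},
\]
which for $1<n<2$ is strictly smaller than $1/(n+4)$. To reach the sharp self-similar rate the paper carries out a separate second stage (Section~3.3): it invokes the \emph{mass conservation} $\|u(\cdot,t)\|_{L^1}=M$, interpolates $\|u\|_{L^2}$ and $\|u_x\|_{L^2}$ on the annulus against $\|u_{xx}\|_{L^2}$ and $M$ (Lemma~\ref{A.4} with $b=1$), feeds this back into the local entropy inequality to get $\iint_{|x|<\Gamma}u_{xx}^2\lesssim \Gamma^{-5}TM^2$, and finally minimises the resulting bound $\Gamma+C\,\Gamma^{-n/(n+8)}T^{2/(n+8)}\le r_0$ (and a companion term) over $\Gamma$ to extract $\Gamma_{\mathrm{opt}}=C_0T^{1/(n+4)}$. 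Your plan never mentions mass conservation, so as written it cannot produce the claimed exponent.

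\medskip
\noindent\textbf{A structural difference in the iteration.} You propose to iterate directly on the entropy functional $b(s)$ via a single inequality $b(s_2)\le C(T)(s_1-s_2)^{-q}b(s_1)^{\theta}$. The paper instead introduces the pair
\[
A(s)=\iint_{Q_T(s)}u^2,\qquad B(s)=\iint_{Q_T(s)}u_x^2,
\]
derives from Lemma~\ref{A.4} (with $b=2-n$) two coupled inequalities \eqref{entr-09-2}--\eqref{entr-10-2}, combines them into $D(s)=A^{1+\kappa_2}+B^{1+\kappa_1}$, and applies a \emph{nonstandard} Stampacchia lemma (Lemma~\ref{A.5}: $f(s-f(s))\le\varepsilon f(s)\Rightarrow f\equiv0$ below a threshold) to an auxiliary function $\delta(s)$ built from $D$. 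Your single-functional formulation may be workable, but the coupled $A/B$ system is what actually closes in the paper, and the version of Stampacchia you will need is the one in Lemma~\ref{A.5}, not the textbook level-set form.
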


\section{Proof of Theorem~\ref{Th2}}

\subsection{Local entropy estimate}

\begin{lemma}\label{local_entr}
Assume that $1 < n < 2$ and $\nu > 1$. Let $\zeta \in C^{1,2}_{t,x}(\bar{Q}_T)$ such that its support
satisfies $\text{supp}(\zeta) \subseteq \Omega$ and $(\zeta^4)_{x} = 0$ on
$\partial\Omega$. Then
there exist
positive constants $C_1,\, C_2$ are independent of $\Omega$, such
that for all $T > 0 $
the strong solution $u$ of
Theorem~\ref{Th1} satisfies
\begin{multline} \label{entr-06}
 \int \limits_{\Omega}{(1-x^2)^{\nu} \zeta^4(x,T)  G_{0} (u) \, dx} - \iint \limits_{Q_T}{ (1-x^2)^{\nu} (\zeta^4)_t  G_{0} (u) \, dxdt} + \\
 C_1 \iint \limits_{Q_T}
{ (1-x^2)^{\nu+2}  u_{xx}^2 \zeta^4 \,  dx dt} \leqslant \int \limits_{\Omega}{(1-x^2)^{\nu} \zeta^4(x,0) \, G_{0} (u_0) \, dx}\\
C_2 \iint \limits_{Q_T} { (1-x^2)^{\nu } u_{x}^2 [  \zeta^4  +
  \zeta^2 \zeta_x^2 + \zeta^3 |\zeta_{xx}|] \,  dx dt} + \\
   C_2 \iint \limits_{Q_T} {(1-x^2)^{\nu - 2} u^2 [  \zeta^4  +
 \zeta_x^4 +  \zeta^2 \zeta_{xx}^2 ]  \,  dx dt}  .
 \end{multline}
\end{lemma}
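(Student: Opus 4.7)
The approach is the local entropy method typical for fourth-order degenerate parabolic equations. I would test equation (\ref{A-1}) against the weighted multiplier $\varphi(x,t)=(1-x^2)^{\nu}\zeta^4 G_0'(u)$ and integrate over $Q_T$, exploiting the defining identity $G_0''(u)=u^{-n}$ which exactly cancels the degenerate mobility $|u|^n$. Although $G_0'(u)$ is formally singular as $u\to 0^{+}$ for $n\in(1,2)$, the combination $u^n G_0'(u)=-u/(n-1)$ is in fact linear in $u$ when $A=0$, so the formal computation makes sense; a rigorous justification is obtained by running it on the strictly positive approximations $u_{\varepsilon}$ used in the proof of Theorem~\ref{Th1} in \cite{Tar-17} and passing to the limit by weak lower semicontinuity on the good $u_{xx}^2$ term and by Fatou on the entropy term.

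Integration by parts in $t$ immediately produces $\int G_0(u)(1-x^2)^{\nu}\zeta^4(\cdot,T)\,dx$ on the left and $\int G_0(u_0)(1-x^2)^{\nu}\zeta^4(\cdot,0)\,dx$ on the right, together with $-\iint (1-x^2)^{\nu}(\zeta^4)_t G_0(u)\,dxdt$. For the spatial flux, one integration by parts in $x$ (boundary contributions vanishing because $\supp(\zeta)\subseteq\Omega$) combined with the expansion
$$\partial_x\bigl[(1-x^2)^{\nu}\zeta^4 G_0'(u)\bigr] = (1-x^2)^{\nu}\zeta^4 G_0''(u)u_x - 2\nu x(1-x^2)^{\nu-1}\zeta^4 G_0'(u) + 4(1-x^2)^{\nu}\zeta^3\zeta_x G_0'(u)$$
splits the contribution into three pieces $I_1,I_2,I_3$. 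The first, after invoking $u^n G_0''(u)=1$ and one further integration by parts against $((1-x^2)u_x)_{xx}$ using $((1-x^2)u_x)_x=-2xu_x+(1-x^2)u_{xx}$, yields
$$-I_1 = \iint_{Q_T}(1-x^2)^{\nu+2}\zeta^4 u_{xx}^2\,dxdt + R_1,$$
where $R_1$ is a sum of integrals of $u_x^2$ against $(1-x^2)^{\nu}$ times $\zeta^4$, $\zeta^2\zeta_x^2$ and $\zeta^3|\zeta_{xx}|$, matching the first remainder family on the right of (\ref{entr-06}).

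For $I_2$ and $I_3$, the substitution $u^n G_0'(u)=-u/(n-1)$ turns them into explicit integrals of the form $\iint x(1-x^2)^{\nu}\zeta^4\, u\,((1-x^2)u_x)_{xx}\,dxdt$ and $\iint (1-x^2)^{\nu+1}\zeta^3\zeta_x\, u\,((1-x^2)u_x)_{xx}\,dxdt$. One further integration by parts transfers an $x$-derivative off the second derivative, producing bilinear combinations of $u$, $u_x$ and $u_{xx}$ weighted by polynomial combinations of $(1-x^2)$-powers, $x$, $\zeta$, $\zeta_x$ and $\zeta_{xx}$. Young's inequality with a small parameter absorbs all $u_{xx}^2$ contributions into the good term $C_1\iint(1-x^2)^{\nu+2}\zeta^4 u_{xx}^2\,dxdt$; the residual $u_x^2$ integrals fall into the first remainder family, and the $u^2$ integrals, produced via one final weighted Hardy-type step (equivalently, one more integration by parts exchanging $u_x$ for $u/(1-x^2)$), are placed into the second family with weight $(1-x^2)^{\nu-2}[\zeta^4+\zeta_x^4+\zeta^2\zeta_{xx}^2]$. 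The main obstacle is precisely this algebraic bookkeeping for $I_2$ and $I_3$: one must verify that every cross term either absorbs cleanly into the good $u_{xx}^2$-term or matches one of the three declared families on the right of (\ref{entr-06}), with no residual piece surviving outside those families.
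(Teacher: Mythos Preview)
Your proposal is correct and follows essentially the same route as the paper: test with the weighted entropy multiplier, use $u^n G_0''(u)=1$ and $u^n G_0'(u)=-u/(n-1)$, integrate by parts to extract the good $[(1-x^2)u_x]_x^2$ (equivalently $(1-x^2)^{\nu+2}u_{xx}^2$) term, and absorb cross terms by Young's inequality. The only organisational differences are that the paper works with a generic test function $\phi$ and substitutes $\phi=(1-x^2)^\nu\zeta^4$ only at the very end, and that it carries the two-parameter regularisation $u_{\epsilon\delta}$ of \cite{Tar-17} explicitly (taking $\epsilon\to0$, then $\delta\to0$) rather than appealing to it in a single sentence; your ``Hardy-type step'' is not actually needed, since the $(1-x^2)^{\nu-2}u^2$ terms fall out directly from Young applied to the $I_2,I_3$ cross terms.
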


\begin{proof}[Proof of Lemma~\ref{local_entr}]

Equation (\ref{A-1}) is doubly degenerate when $u =0$ and $x = \pm 1$. Therefore, for
any $\epsilon > 0$ and $\delta > 0$ we consider two-parametric regularised equations
\begin{equation}\label{eq:regularized eqn}
 u_{\epsilon \delta,t} + \left[(1-x^{2}+\delta) (|u_{\epsilon\delta}|^{n}+ \epsilon  )\left((1-x^{2}+\delta) u_{\epsilon \delta,x}\right)_{xx}\right]_{x}=0 \text{ in } Q_T
\end{equation}
with boundary conditions
\begin{equation}\label{reg-1}
 u_{\epsilon \delta,x} =\left((1-x^{2}+\delta) u_{\epsilon \delta,x} \right)_{xx}=0  \text{ at } x= \pm1,
\end{equation}
and initial data
\begin{equation}\label{reg-2}
u_{\epsilon \delta}(x,0)= u_{0,\epsilon \delta }(x)\in C^{4 + \gamma}( \bar{\Omega}), \ \gamma > 0,
\end{equation}
where
\begin{equation}\label{reg-3-0}
u_{0,\epsilon\delta }(x) \geqslant u_{0\delta }(x) + \epsilon^{\theta}, \  \ \theta \in (0, \tfrac{1}{2(n-1)}),
\end{equation}
\begin{equation}\label{reg-3-1}
  u_{0 ,\epsilon \delta} \to   u_{0 \delta} \text{ strongly in } H^1(\Omega)
\text{ as } \epsilon \to 0,
\end{equation}
\begin{equation}\label{reg-3}
(1-x^{2}+\delta)^{\frac{1}{2}} u_{0x,\delta} \to (1-x^{2})^{\frac{1}{2}} u_{0,x } \text{ strongly in } L^2(\Omega)
\text{ as }  \delta \to 0.
\end{equation}
The parameters $\epsilon > 0$ and $\delta > 0$ in (\ref{eq:regularized eqn}) make  the problem
regular up to the boundary (i.e. uniformly parabolic).
The existence of a local in time solution of (\ref{eq:regularized eqn})
is guaranteed by the classical Schauder estimates (see \cite{friedman1958interior}).
Now suppose that $u_{\epsilon \delta}$ is a solution of equation
(\ref{eq:regularized eqn}) and that it is continuously differentiable
with respect to the time variable and fourth order continuously differentiable
with respect to the spatial variable. For the full detailed proof of existence of strong solutions
please refer to \cite{Tar-17}.

Multiplying the equation (\ref{eq:regularized eqn})
by $\phi(x,t) G'_{\epsilon}(u_{\epsilon\delta})$, integrating over $\Omega$, and then integrating by parts yield
\begin{multline} \label{entr-01}
 \tfrac{d}{dt} \int \limits_{\Omega}{\phi \, G_{\epsilon} (u_{\epsilon \delta}  ) \, dx} - \\
\int \limits_{\Omega}{\phi_t  G_{\epsilon} (u_{\epsilon \delta} ) \, dx} =
\int \limits_{\Omega} { (1-x^{2}+\delta) u_{\epsilon \delta,x} [(1-x^{2}+\delta) u_{\epsilon \delta,x} ]_{ xx} \phi \,  dx} +\\
\int \limits_{\Omega} { (1-x^{2}+\delta) (|u_{\epsilon\delta}|^n + \epsilon) G'_{\epsilon}(u_{\epsilon\delta}) [(1-x^{2}+\delta) u_{\epsilon \delta,x} ]_{ xx} \phi_x \,  dx} = \\
- \int \limits_{\Omega} {   [(1-x^{2}+\delta) u_{\epsilon \delta,x} ]_{  x}^2 \phi \,  dx} -
\int \limits_{\Omega} { (1-x^{2}+\delta) u_{\epsilon \delta,x} [(1-x^{2}+\delta) u_{\epsilon \delta,x} ]_{  x} \phi_x \,  dx} -\\
\int \limits_{\Omega} { [(1-x^{2}+\delta) (|u_{\epsilon\delta}|^n + \epsilon) G'_{\epsilon}(u_{\epsilon\delta}) \phi_x ]_x [(1-x^{2}+\delta) u_{\epsilon \delta,x} ]_{  x}  \,  dx}= \\
- \int \limits_{\Omega} {   [(1-x^{2}+\delta) u_{\epsilon \delta,x} ]_{  x}^2 \phi \,  dx}  + \tfrac{1}{2}\int \limits_{\Omega} { [(1-x^{2}+\delta) u_{\epsilon \delta,x} ]^2 \phi_{xx} \,  dx} - \\
\int \limits_{\Omega} {[(1-x^{2}+\delta) u_{\epsilon \delta,x} ]_{  x} (|u_{\epsilon\delta}|^n + \epsilon) G'_{\epsilon}(u_{\epsilon\delta})((1-x^{2}+\delta) \phi_x)_x \,  dx} -\\
 \int \limits_{\Omega} { [(1-x^{2}+\delta) u_{\epsilon \delta,x} ]_x (1-x^2+\delta)[  (|u_{\epsilon\delta}|^n + \epsilon) G'_{\epsilon}(u_{\epsilon\delta})]'_u u_{\epsilon \delta,x} \phi_x   \,  dx}.
\end{multline}
Integrating (\ref{entr-01}) in time and taking the regularizing parameter $\epsilon \to 0$, by applying the Young inequality
and $z^n G'_0(z) = \frac{1}{1-n} z  $, we finally get
\begin{multline} \label{entr-02}
 \int \limits_{\Omega}{\phi \, G_{0} (u_{ \delta}  ) \, dx} -
\iint \limits_{Q_T}{\phi_t  G_{0} (u_{ \delta} ) \, dxdt} + \iint \limits_{Q_T}
{   [(1-x^{2}+\delta) u_{\delta,x} ]_{  x}^2 \phi \,  dx dt} \leqslant \\
 \int \limits_{\Omega}{\phi \, G_{0} (u_{0, \delta}  ) \, dx} +
 \tfrac{1}{2}\iint \limits_{Q_T} { [(1-x^{2}+\delta) u_{ \delta,x} ]^2 \phi_{xx} \,  dx dt}  -\\
\tfrac{1}{1-n} \iint \limits_{Q_T}
{  [(1-x^{2}+\delta) u_{\delta,x} ]_{  x}  u_{\delta}   ((1-x^{2}+\delta) \phi_x)_x \,  dx dt} -\\
\tfrac{1}{1-n} \iint \limits_{Q_T}
{  [(1-x^{2}+\delta) u_{\delta,x} ]_{  x}(1-x^{2}+\delta) u_{ \delta,x} \phi_x  \,  dx dt}
\leqslant\\
 \int \limits_{\Omega}{\phi \, G_{0} (u_{0, \delta}  ) \, dx} +  \mu  \iint \limits_{Q_T}
{   [(1-x^{2}+\delta) u_{\delta,x} ]_{  x}^2 \phi \,  dx dt} +  \\
\tfrac{2-n}{2(1-n)}\iint \limits_{Q_T} { [(1-x^{2}+\delta) u_{ \delta,x} ]^2 \phi_{xx} \,  dx dt} +
\tfrac{1}{4\mu (1-n)^2}\iint \limits_{Q_T} { u^2_{\delta}  \tfrac{((1-x^{2}+\delta) \phi_x)_x^2}{\phi} \,  dx dt} ,
\end{multline}
where $\mu  > 0$. Choosing $\mu $ in (\ref{entr-02}) such that $0 < \mu  < 1$,
we arrive at
\begin{multline} \label{entr-03}
 \int \limits_{\Omega}{\phi \, G_{0} (u_{ \delta}  ) \, dx} -\\
\iint \limits_{Q_T}{\phi_t  G_{0} (u_{ \delta} ) \, dxdt} + C \iint \limits_{Q_T}
{   [(1-x^{2}+\delta) u_{\delta,x} ]_{  x}^2 \phi \,  dx dt} \leqslant \\
C \iint \limits_{Q_T} { [(1-x^{2}+\delta) u_{ \delta,x} ]^2   |\phi_{xx}|  \,  dx dt} +
C \iint \limits_{Q_T} { u^2_{\delta}  \tfrac{((1-x^{2}+\delta) \phi_x)_x^2}{\phi}   \,  dx dt} .
\end{multline}
Letting $\delta \to 0$ in (\ref{entr-03}), we deduce that
\begin{multline} \label{entr-04}
 \int \limits_{\Omega}{\phi(T) \, G_{0} (u) \, dx} -\iint \limits_{Q_T}{\phi_t  G_{0} (u) \, dxdt} + \\
C \iint \limits_{Q_T}
{   [(1-x^{2}) u_{x} ]_{  x}^2 \phi \,  dx dt} \leqslant \int \limits_{\Omega}{\phi(0) \, G_{0} (u_0) \, dx}\\
C \iint \limits_{Q_T} { [(1-x^{2}) u_{x} ]^2   |\phi_{xx}|   \,  dx dt} +
C \iint \limits_{Q_T} { u^2   \tfrac{((1-x^{2}) \phi_x)_x^2}{\phi}  \,  dx dt} .
\end{multline}
Taking $\phi(x,t) = (1-x^2)^{\nu} \zeta^4(x,t)$ in (\ref{entr-04})  for $\nu > 1 $,
we have
\begin{multline} \label{entr-05}
 \int \limits_{\Omega}{(1-x^2)^{\nu} \zeta^4(T)  G_{0} (u) \, dx} - \iint \limits_{Q_T}{ (1-x^2)^{\nu} (\zeta^4)_t  G_{0} (u) \, dxdt} + \\
 C \iint \limits_{Q_T}
{ (1-x^2)^{\nu}  [(1-x^{2}) u_{x} ]_{  x}^2 \zeta^4 \,  dx dt} \leqslant \int \limits_{\Omega}{(1-x^2)^{\nu} \zeta^4(0) \, G_{0} (u_0) \, dx}+\\
C \iint \limits_{Q_T} { [(1-x^{2}) u_{x} ]^2 [ (1-x^2)^{\nu - 2  } \zeta^4  +
(1-x^2)^{\nu} ( \zeta^2 \zeta_x^2 + \zeta^3 |\zeta_{xx}|) ] \,  dx dt} + \\
C \iint \limits_{Q_T} { u^2 [ (1-x^2)^{\nu - 2 } \zeta^4  +
(1-x^2)^{\nu + 2 }  \zeta_x^4 +
(1-x^2)^{\nu + 2} \zeta^2 \zeta_{xx}^2 ]  \,  dx dt} \leqslant \\
\int \limits_{\Omega}{(1-x^2)^{\nu} \zeta^4(0) \, G_{0} (u_0) \, dx}+
C \iint \limits_{Q_T} { (1-x^2)^{\nu} u_{x}^2 [  \zeta^4  +
  \zeta^2 \zeta_x^2 + \zeta^3 |\zeta_{xx}| ] \,  dx dt} + \\
   C \iint \limits_{Q_T} {(1-x^2)^{\nu - 2 } u^2 [  \zeta^4  +
 \zeta_x^4 +  \zeta^2 \zeta_{xx}^2 ]  \,  dx dt}  ,
 \end{multline}
whence we deduce   (\ref{entr-06}).
\end{proof}

\subsection{Finite speed of propagation}

For an arbitrary $s > 0$ and $0 < \delta \leqslant s$ we consider the families of sets
\begin{equation}\label{G:om}
\begin{gathered}
\Omega (s) := \{ x \in \bar{\Omega}: |x| \leqslant s \},\ Q_T(s)= (0,T)
\times \Omega (s), \\
 K_T(s,\delta) = Q_T(s) \setminus Q_T(s-\delta).
\end{gathered}
\end{equation}
We introduce a nonnegative cutoff function $\eta(\tau)$ from the
space $C^2(\mathbb{R}^1)$ with the following properties:
\begin{equation}\label{G:test1}
\eta(\tau) = \left\{
\begin{aligned}
\hfill 1 \  & \ \text{ if } \tau \leqslant 0,\\
\hfill -\tau^3 (6 \tau^2 - 15 \tau + 10) +1 \  & \ \text{ if } 0  < \tau < 1,\\
\hfill  0 \ & \ \text{ if } \tau \geqslant 1.
\end{aligned}\right.
\end{equation}
Next we introduce our main cut-off functions $\eta _{s,\delta }
(x) \in C^2 (\bar{\Omega})$ such that $0 \leqslant \eta_{s,\delta
} (x) \leqslant 1 \  \forall\,  x \in \bar{\Omega}$ and possess
the following properties:
\begin{equation}\label{G:test2}
\begin{gathered}
\eta _{s,\delta } (x) = \eta \left( \tfrac{|x| - (s -\delta)}{\delta} \right) = \left\{
\begin{aligned} \hfill  1 \;
& , x \in \Omega(s - \delta),\\
\hfill  0\; & , x \in \Omega \setminus \Omega(s ), \\
\end{aligned}\right. \\
| (\eta _{s,\delta })_x| \leqslant \tfrac{15}{8\delta } ,\
| (\eta _{s,\delta })_{xx}| \leqslant \tfrac{5(\sqrt{3} -1)} {\delta^2}
\end{gathered}
\end{equation}
for all $s > 0$ and  $0 < \delta \leqslant s  $. Choosing
$\zeta^4(x,t) = \eta _{s,\delta } (x) e^{-\tfrac{t}{T}}$ in
(\ref{entr-06}), we arrive at
\begin{multline}\label{entr-07}
\int\limits_{\Omega (s - \delta)} {(1-x^2)^{\nu} u^{2 - n}(T)\,dx} +
\tfrac{C}{T} \iint\limits_{Q_T(s - \delta)} {(1-x^2)^{\nu} u^{2   - n}\,dx dt} + \\
C \iint\limits_{Q_T(s - \delta)} {(1-x^2)^{\nu+2} u_{xx}^2 \,dx dt} \leqslant
e \int\limits_{\Omega (s)} {(1-x^2)^{\nu} u_0^{2 - n}(x)\,dx}  +\\
\tfrac{C}{\delta^2} \iint\limits_{K_T(s,\delta)} { (1-x^2)^{\nu } u_{x}^2 \,dx dt} +
\tfrac{C}{\delta^4} \iint\limits_{K_T(s,\delta)} { (1-x^2)^{\nu - 2 }  u^2 \,dx dt}
\end{multline}
for all $0 < \delta \leqslant s  $. By (\ref{entr-07}) we deduce that
\begin{multline*}
(1-(s-\delta)^2)^{ \nu} \int\limits_{\Omega (s - \delta)} { u^{2 - n}(T)\,dx} +
\tfrac{C(1-(s-\delta)^2)^{ \nu}}{T} \iint\limits_{Q_T(s - \delta)} {  u^{2   - n}\,dx dt} + \\
C(1-(s-\delta)^2)^{ \nu  }\iint\limits_{Q_T(s - \delta)} {(1-x^2)^{2} u_{xx}^2 \,dx dt} \leqslant
\tfrac{C(1-(s-\delta)^2)^{ \nu} }{\delta^2} \iint\limits_{K_T(s,\delta)} {  u_{x}^2 \,dx dt} + \\
\tfrac{C(1-(s-\delta)^2)^{ \nu} }{\delta^4} \iint\limits_{K_T(s,\delta)} { (1-x^2)^{ - 2 }  u^2 \,dx dt},
\end{multline*}
whence
\begin{multline}\label{entr-08}
 \int\limits_{\Omega (s - \delta)} { u^{2 - n}(T)\,dx} +
\tfrac{C }{T} \iint\limits_{Q_T(s - \delta)} {  u^{2   - n}\,dx dt} + \\
C (1-r_0^2)^2 \iint\limits_{Q_T(s - \delta)} { u_{xx}^2 \,dx dt} \leqslant
\tfrac{C }{\delta^2} \iint\limits_{K_T(s,\delta)} {  u_{x}^2 \,dx dt} + \\
\tfrac{C (1- r_0^2)^{ - 2 } }{\delta^4} \iint\limits_{K_T(s,\delta)} {  u^2 \,dx dt} =: R(s)
\end{multline}
for all $0 < \delta \leqslant s \leqslant r_0 $. We apply Lemma~\ref{A.4}
in the region $\Omega(s-\delta)$ to a function $v := u$ with $ a = d = j =2$,
$b = 2 - n $, $k = 0$ (or $k =1$), $N = 1$, and
$\theta_1 =  \frac{n}{8 -3n}$ (or $\theta_2 =  \frac{4- n}{8 -3n}$).
Integrating the resulted inequalities with respect to time and
taking into account (\ref{entr-08}), we arrive at the following
relations:
\begin{equation}\label{entr-09}
A(s - \delta)  \leqslant C (1-r_0^2)^{-\alpha_1} T^{\beta_1} \bigl(R(s) \bigr)^{1 +  \kappa_1}   + C\, T \bigl(R(s)  \bigr)^{1 +  \kappa_3} ,
\end{equation}
\begin{equation}\label{entr-10}
B(s - \delta)  \leqslant C (1-r_0^2)^{-\alpha_2} T^{\beta_2} \bigl(R(s) \bigr)^{1 +  \kappa_2}   + C\, T \bigl(R(s)  \bigr)^{1 +  \kappa_3},
\end{equation}
where
$$
A(s) := \iint\limits_{Q_T(s )}{ u^{2} dxdt}, \  B(s) := \iint\limits_{Q_T(s - \delta)}{ u_x^{2} dxdt},
$$
$$
\alpha_1 = \tfrac{4(n+4)}{8-3n},\ \alpha_2 =  \tfrac{4(6 -n)}{8-3n}, \beta_1 =  \tfrac{4(2 -n)}{8-3n}, \
\beta_2 =  \tfrac{2(2 -n)}{8-3n},
$$
$$
\kappa_1 = \tfrac{4n}{8-3n}, \ \kappa_2 = \tfrac{2n}{8-3n}, \ \kappa_3 = \tfrac{n}{2- n}.
$$
Since all integrals on the right-hand sides of (\ref{entr-09}), (\ref{entr-10})
vanish as $T \to 0$ and $u \in L^2(0,T; H^1(-r_0,r_0))$, then
for sufficiently small $T$ we get
\begin{equation}\label{entr-09-2}
A(s - \delta)  \leqslant C_3  (1-r_0^2)^{- \alpha_1} T^{\beta_1} \bigl( \delta^{-4} A(s) + \delta^{-2} B(s) \bigr)^{1 + \kappa_1}   ,
\end{equation}
\begin{equation}\label{entr-10-2}
B(s - \delta)  \leqslant C_4  (1-r_0^2)^{- \alpha_2} T^{\beta_2} \bigl( \delta^{-4} A(s) + \delta^{-2} B(s) \bigr)^{1 + \kappa_2}.
\end{equation}
Let us denote by
$$
D(s) := A^{1 + \kappa_2}(s ) + B^{1 + \kappa_1}(s), \ \kappa = (1+\kappa_1)(1+\kappa_2),
$$
$$
C_5(T) := 2^{ \kappa -1}\max \{ [ C_3 (1-r_0^2)^{- \alpha_1} T^{\beta_1}]^{1+\kappa_2}, [ C_4 (1-r_0^2)^{- \alpha_2} T^{\beta_2}  ]^{1+ \kappa_1} \}.
$$
Without loss of generality, we can define the function
$$
\tilde{D}(s) = D(s) \text{ if } s \in (0,r_0], \text{ and } \tilde{D}(s) = 0 \text{ if } s > r_0.
$$
Then by (\ref{entr-09-2}), (\ref{entr-10-2}) we arrive at
\begin{equation}\label{entr-11}
\tilde{D}(s - \delta)  \leqslant C_5(T)  \bigl( \delta^{-4 \kappa } \tilde{D}^{1+\kappa_1}(s) +
 \delta^{-2\kappa} \tilde{D}^{1+\kappa_2}(s) \bigr)
\end{equation}
for all $s \in \mathbb{R}^{+}$ and $\delta \in (0,r_0]$. Choosing
$$
\delta (s) = \max \{ [4 C_5(T) \tilde{D}^{ \kappa_1}(s) ]^{\frac{1}{4\kappa}}, [4 C_5(T) \tilde{D}^{ \kappa_2}(s) ]^{\frac{1}{2\kappa}} \}
$$
in (\ref{entr-11}), we find that
$$
\tilde{D}(s - \delta(s) )  \leqslant \tfrac{1}{2} \tilde{D}(s),
$$
whence it follows
\begin{equation}\label{entr-12}
\delta (s - \delta(s) )  \leqslant \gamma \delta(s)
 \  \ \forall\, s \in \mathbb{R}^{+},
\end{equation}
where $\gamma = \max\{2^{-\frac{\kappa_1}{4\kappa}}, 2^{-\frac{\kappa_2}{2\kappa}} \} < 1$.
Applying Stampacchia's lemma (see Lemma~\ref{A.5}) to (\ref{entr-12}), we obtain that
$$
\delta (s) = 0 \text{ for all } s \leqslant r_0 -  \tfrac{\delta(r_0)}{1-\gamma}.
$$
Next, we will find the upper bound for $\delta(r_0)$. In view of Theorem~\ref{Th1},
$(1-x^2)^{\frac{\nu -2}{2}} u \in L^2(Q_T)$ and $(1-x^2)^{\frac{\nu}{2}} u_x \in L^2(Q_T)$
for any $\nu > 1$ then the right-hand side of (\ref{entr-07}) is bounded for all $T > 0$.
So, taking $s = 2r_0$ and $\delta = r_0$ in (\ref{entr-09}) and (\ref{entr-10}), we obtain that
$ \tilde{D}(r_0) \leqslant C_6 \, C_5(T)$, whence
$$
\delta (r_0) \leqslant C_7 (1-r_0^2)^{- \frac{2(6-n)}{8-3n}} T^{\frac{ 2 -n }{8-3n}}.
$$
This implies the upper bound for speed of propagation to solution  support, i.\,e.
\begin{equation}\label{fsp-0}
\Gamma(T) \leqslant r_0 - C_8 T^{\frac{ 2 -n }{8-3n}}  \ \forall\, T \leqslant T^*:= (\tfrac{r_0}{C_8})^{\frac{8-3n}{ 2 -n }}
\end{equation}
for
any $r_0 \in (0,1)$, where $C_8 = \frac{C_7}{1-\gamma} (1-r_0^2)^{- \frac{2(6-n)}{8-3n}}$.

\subsection{Exact upper bound for speed of propagation}

In this section we refine the estimate (\ref{fsp-0}). Applying Lemma~\ref{A.4}
in the region $\Omega(s) \setminus \Omega(s-\delta)$ to a function $v := u$ with $ a = d = j =2$,
$b = 1 $, $k = 0$ (or $k =1$), $N = 1$, and
$\theta_1 =  \frac{1}{5}$ (or $\theta_2 =  \frac{3}{5}$), and integrating the resulted
inequalities with respect to time,
taking into account the mass conservation (\ref{mass-con}), we arrive at the following
estimates:
\begin{equation}\label{u-1}
\iint\limits_{K_T(s,\delta)} {  u^2 \,dx dt} \leqslant C\,T^{1-\theta_1} M^{2(1-\theta_1)}
\Bigl( \iint\limits_{K_T(s,\delta)} {  u_{xx}^2 \,dx dt} \Bigr)^{\theta_1} + C\,\delta^{-1} T M^2,
\end{equation}
\begin{equation}\label{u-2}
\iint\limits_{K_T(s,\delta)} {  u_x^2 \,dx dt} \leqslant C\,T^{1-\theta_2} M^{2(1-\theta_2)}
\Bigl( \iint\limits_{K_T(s,\delta)} {  u_{xx}^2 \,dx dt} \Bigr)^{\theta_2} + C\,\delta^{-3} T M^2.
\end{equation}
Using (\ref{u-1}), (\ref{u-2}) and Young inequality, from (\ref{entr-08}) we find that
\begin{multline*}
 \int\limits_{\Omega (s - \delta)} { u^{2 - n}(T)\,dx} +
\tfrac{C }{T} \iint\limits_{Q_T(s - \delta)} {  u^{2   - n}\,dx dt} +
C (1-r_0^2)^2 \iint\limits_{Q_T(s - \delta)} { u_{xx}^2 \,dx dt} \leqslant \\
\varepsilon (1-r_0^2)^2 \iint\limits_{K_T(s,\delta)} { u_{xx}^2 \,dx dt} +
C_{\varepsilon} \delta^{-5} (1-r_0^2)^{-3} T M^2,
\end{multline*}
where $\varepsilon > 0$. Selecting $\varepsilon \in (0,2^{-5})$ enough small and making
standard iteration process, we get
\begin{multline}\label{u-3}
 \int\limits_{\Omega (s - \delta)} { u^{2 - n}(T)\,dx} +
\tfrac{C }{T} \iint\limits_{Q_T(s - \delta)} {  u^{2   - n}\,dx dt} + \\
C (1-r_0^2)^2 \iint\limits_{Q_T(s - \delta)} { u_{xx}^2 \,dx dt} \leqslant
C\,\delta^{-5} (1-r_0^2)^{-3} T M^2.
\end{multline}
Taking $s = 2\Gamma(T)$ and $\delta = \Gamma(T)$ in (\ref{u-3}), we obtain that
$$
\iint\limits_{Q_T(\Gamma(T))} { u_{xx}^2 \,dx dt} \leqslant C\,\Gamma^{-5}(T) (1-r_0^2)^{-5} T M^2,
$$
whence, similar to (\ref{u-1}) and (\ref{u-2}), we have
$$
A(\Gamma(T)) \leqslant C\,\Gamma^{-1}(T) (1-r_0^2)^{-1} T M^2, \ \  B(\Gamma(T)) \leqslant C\,\Gamma^{-3}(T) (1-r_0^2)^{-3} T M^2.
$$
Hence,
\begin{multline*}
\delta(\Gamma(T)) \leqslant C  \max \biggl \{ [\Gamma^{- \kappa_1 }(T) (1- r_0^2)^{-(\kappa_1 + \alpha_1)}
T^{ \kappa_1 + \beta_1 } M^{2\kappa_1}]^{\frac{1}{4(1+\kappa_1)}},    \\
 [\Gamma^{- 3\kappa_2 }(T) (1- r_0^2)^{-(3\kappa_2 + \alpha_2)}
T^{ \kappa_2 + \beta_2 } M^{2\kappa_2}]^{\frac{1}{2(1+\kappa_2)}}  \biggr \} = \\
C_9 \max \biggl \{ \Gamma^{-  \frac{n}{n + 8} }(T)
T^{ \frac{2}{n + 8}}  ,      \Gamma^{-  \frac{3n}{ 8 -n} }(T)
T^{ \frac{2}{8 - n}}   \biggr  \}.
  \end{multline*}
Thus, we have
\begin{equation}\label{u-4}
\Gamma(T) + C_{10} \max \biggl \{ \Gamma^{-  \frac{n}{n + 8} }(T)
T^{ \frac{2}{n + 8}}  ,      \Gamma^{-  \frac{3n}{ 8 -n} }(T)
T^{ \frac{2}{8 - n}}   \biggr  \} \leqslant r_0,
\end{equation}
where $C_{10} = \frac{C_9}{1-\gamma}$. Now we use the following calculus result: let $a > 0$ and $b > 0$
then the function $f(x) = x + a\,x^{-b}$ for all $x \geqslant 0$  has minimum at $x_{\min} = (a b)^{\frac{1}{1+b}}$
and $f(x_{\min}) = \frac{1+b}{b} x_{\min}$. Hence, minimizing the right-hand side,
we obtain that
$$
\Gamma_{opt}(T) = C_{0}  T^{\frac{1}{n + 4}} \ \ \forall\, T \leqslant T^*.
$$
This proves Theorem~\ref{Th1} completely. $\square$

\section*{Appendix A}\label{app-A}

\renewcommand{\thesection}{A}\setcounter{lemma}{0}
\renewcommand{\thetheorem}{A.\arabic{theorem}}

\begin{lemma}[\cite{N1}]\label{A.4}
If $\Omega  \subset \mathbb{R}^N $ is a bounded domain
with piecewise-smooth boundary, $a > 1$, $b \in (0, a),\ d
> 1,$ and $0 \leqslant k < j,\ k,j \in \mathbb{N}$, then there
exist positive constants $d_1$ and $d_2$ $(d_2 = 0 \text{ if }
\Omega$ is unbounded$)$ depending only on $\Omega ,\ d,\ j,\ b,$ and
$N$ such that the following inequality is valid for every $v(x) \in
W^{j,d} (\Omega ) \cap L^b (\Omega )$:
$$
\left\| {D^k v} \right\|_{L^a (\Omega )}  \leqslant d_1 \left\| {D^j
v} \right\|_{L^d (\Omega )}^\theta  \left\| v \right\|_{L^b (\Omega
)}^{1 - \theta }  + d_2 \left\| v \right\|_{L^b (\Omega )} ,\ \theta
= \frac{{\tfrac{1} {b} + \tfrac{k} {N} - \tfrac{1} {a}}} {{\tfrac{1}
{b} + \tfrac{j} {N} - \tfrac{1} {d}}} \in \left[ {\tfrac{k} {j},1}
\right)\!\!.
$$
 Note that if  $\Omega = B(0, R) \setminus B(0,r) $, where $B(0,x)$ is
ball with the radius $x$ and the origin at $0$, then $d_2 = c (R - r)^{ -   \frac{(a - b)N}{a b} -k}$.
\end{lemma}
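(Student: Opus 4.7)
The statement is the classical Gagliardo--Nirenberg interpolation inequality, proved in Nirenberg's 1959 paper [N1]; my plan is to reproduce the two-step argument (dimensional analysis plus iterated integration by parts on $\mathbb{R}^N$) and then extract the explicit $(R-r)$-dependence on an annular domain by rescaling.

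The first observation is that the exponent $\theta$ is \emph{forced} by homogeneity: testing the inequality on $v_\lambda(x) := v(\lambda x)$ on $\mathbb{R}^N$ gives $\|D^k v_\lambda\|_{L^a} = \lambda^{k-N/a}\|D^k v\|_{L^a}$, and analogously for the other two norms; equating powers of $\lambda$ yields $k - N/a = \theta(j - N/d) - (1-\theta)N/b$, which is exactly the stated formula, with $\theta \in [k/j, 1)$ coming from $b \in (0,a)$. To prove the $\mathbb{R}^N$ version (with $d_2 = 0$), I would reduce to one dimension. The core 1D identity, for $v \in C_c^\infty(\mathbb{R})$ and $a \geqslant 2$, is $\int |v'|^a\,dx = -(a-1)\int |v'|^{a-2} v'' v\,dx$ obtained by a single integration by parts; applying H\"older with the conjugate triple $(b, d, a/(a-2))$ chosen so that $\tfrac{1}{b} + \tfrac{1}{d} + \tfrac{a-2}{a} = 1$ and absorbing the resulting factor $\|v'\|_{L^a}^{a-2}$ gives $\|v'\|_{L^a} \leqslant C \|v\|_{L^b}^{1/2}\|v''\|_{L^d}^{1/2}$. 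Induction on $(k,j)$ together with Nirenberg's slicing in coordinate directions lifts this to $\mathbb{R}^N$ for all admissible indices.

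For a bounded $\Omega$ with piecewise-smooth boundary, I would apply a Calder\'on/Stein extension $E:W^{j,d}(\Omega) \to W^{j,d}(\mathbb{R}^N)$, invoke the Euclidean inequality on $Ev$, and absorb the lower-order terms appearing in $\|D^j Ev\|_{L^d(\mathbb{R}^N)}$ via the interpolation $\|D^m v\|_{L^d(\Omega)} \leqslant \varepsilon \|D^j v\|_{L^d(\Omega)} + C_\varepsilon \|v\|_{L^b(\Omega)}$ for $0 < m < j$; the remainder produces the additive term $d_2 \|v\|_{L^b(\Omega)}$. For $\Omega = B(0,R) \setminus B(0,r)$, I would rescale $x \mapsto x/(R-r)$ onto an annulus of unit thickness, where the norms transform as $\|D^k v\|_{L^a} \mapsto (R-r)^{N/a - k}\|D^k w\|_{L^a}$ and $\|v\|_{L^b} \mapsto (R-r)^{N/b}\|w\|_{L^b}$, so applying the inequality on the unit annulus with a purely geometric constant $d_2^{(1)}$ and pulling back gives $d_2 = d_2^{(1)}(R-r)^{-(a-b)N/(ab) - k}$ precisely as stated.

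The main obstacle is verifying that the geometric constant $d_2^{(1)}$ on the unit annulus can be chosen independently of the inner radius $r/(R-r)$: a thin annulus with very large inner radius can approach a slab, and one must check the extension operator does not degenerate in that limit. A rotationally symmetric extension across $|x| = r/(R-r)$ and $|x| = R/(R-r)$, or a boundary partition-of-unity argument with uniform flat charts near each boundary component, handles this and yields the scaling invariance needed for the explicit form of $d_2$ to be meaningful.
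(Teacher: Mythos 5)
The paper offers no proof of this lemma at all: it is quoted verbatim as a known result, with the citation [N1] pointing to Nirenberg's 1966 note \emph{An extended interpolation inequality} (not the 1959 paper, as you state). So the only meaningful review is of your reconstruction itself, and there is one genuine gap in it. The lemma is stated for $b\in(0,a)$ with no lower bound $b\geqslant 1$, and the case $0<b<1$ is precisely the one used in this paper (the lemma is applied with $b=2-n\in(0,1)$ for $1<n<2$, and with $b=1$). For $b<1$ the quantity $\|v\|_{L^b}$ is only a quasi-norm, and the two pillars of your argument break down there: the H\"older step in the 1D identity $\int|v'|^a\,dx=-(a-1)\int|v'|^{a-2}v''v\,dx$ requires the conjugate exponents to be at least $1$, so $1/b$ cannot serve as a H\"older exponent; and a Calder\'on/Stein extension operator, built from integral averaging, is not bounded on $L^b$ for $b<1$, so you cannot control $\|Ev\|_{L^b(\mathbb{R}^N)}$ by $\|v\|_{L^b(\Omega)}$, nor can you invoke the absorption inequality $\|D^m v\|_{L^d}\leqslant\varepsilon\|D^j v\|_{L^d}+C_\varepsilon\|v\|_{L^b}$ without already having the extended ($b<1$) result in hand. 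Handling $b<1$ requires a different device (Nirenberg's extension of the classical argument, e.g.\ working on level/monotonicity decompositions in 1D), and this is exactly why the ``extended'' inequality is the one cited.

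The remaining parts of your sketch are sound and worth keeping: the dilation argument correctly identifies $\theta$, and your rescaling of the annulus $B(0,R)\setminus B(0,r)$ to unit thickness does reproduce the stated constant, since the multiplicative term is scale-invariant by the choice of $\theta$ while the additive term picks up exactly the factor $(R-r)^{N/a-N/b-k}=(R-r)^{-\frac{(a-b)N}{ab}-k}$. You are also right to flag the uniformity of the unit-thickness-annulus constant as the inner radius grows; note, however, that a patchwise/partition argument localizes additive inequalities much more readily than the multiplicative form, so if you pursue this you should either prove the additive form uniformly and then recover the multiplicative one, or use radial slicing combined with the one-dimensional result on intervals of fixed length, rather than summing multiplicative estimates over charts.
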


%
%
%

\begin{lemma}[\cite{Shi93}]\label{A.5}
Assume that $f(s)$ is nonnegative
nondecreasing function  satisfying the following
inequality
$$
f(s - f(s)) \leqslant \varepsilon  f(s) \ \forall\, s \leqslant s_0,
$$
where $\varepsilon \in (0,1)$. Then $f(s) = 0$ for all $s \leqslant s_0 - \frac{f(s_0)}{1-\varepsilon}$.
\end{lemma}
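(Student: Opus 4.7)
The plan is a classical iteration/contraction argument on the level sets of $f$. Define recursively a sequence $\{s_n\}_{n\ge 0}$ starting from the given $s_0$ by
\[
s_{n+1} := s_n - f(s_n).
\]
Because $f$ is nonnegative, $s_{n+1} \le s_n$, so by induction $s_n \le s_0$ and the hypothesis is applicable at each step. Applying the hypothesis at $s_n$ gives $f(s_{n+1}) = f(s_n - f(s_n)) \le \varepsilon f(s_n)$, and inducting yields the geometric decay
\[
f(s_n) \le \varepsilon^{\,n} f(s_0).
\]

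Next, I would bound the total displacement. Telescoping,
\[
s_0 - s_n = \sum_{k=0}^{n-1} f(s_k) \le f(s_0)\sum_{k=0}^{n-1} \varepsilon^{k} \le \frac{f(s_0)}{1-\varepsilon},
\]
so the monotone nonincreasing sequence $\{s_n\}$ is bounded below by $s_0 - f(s_0)/(1-\varepsilon)$ and hence converges to some limit $s_\infty \ge s_0 - f(s_0)/(1-\varepsilon)$.

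To conclude, combine monotonicity of $f$ with the decay $f(s_n)\to 0$: for every $n$ we have $s_\infty \le s_n$, whence $f(s_\infty) \le f(s_n) \le \varepsilon^n f(s_0) \to 0$, forcing $f(s_\infty)=0$. Then for any $s \le s_0 - f(s_0)/(1-\varepsilon) \le s_\infty$, the monotonicity of $f$ gives $0 \le f(s) \le f(s_\infty) = 0$, which is the desired conclusion.

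The only subtlety worth flagging is keeping the iteration inside the domain where the hypothesis is valid, i.e.\ verifying $s_n \le s_0$ at every step; this is immediate from $f\ge 0$ but must be checked explicitly so that the recursion $f(s_{n+1})\le\varepsilon f(s_n)$ is legitimate. Everything else is a standard geometric-series estimate plus a one-line monotonicity argument, so I expect no real obstacle; the proof is essentially Stampacchia's original iteration, adapted to the ``$s - f(s)$'' shift instead of the usual level-set formulation.
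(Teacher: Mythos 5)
Your argument is correct and complete: the iteration $s_{n+1}=s_n-f(s_n)$ stays in the region $s\leqslant s_0$ where the hypothesis applies, the geometric decay $f(s_n)\leqslant\varepsilon^n f(s_0)$ and the telescoping bound $s_0-s_n\leqslant f(s_0)/(1-\varepsilon)$ are exactly what is needed, and monotonicity of $f$ finishes the proof. The paper itself gives no proof of Lemma~\ref{A.5} (it is quoted from \cite{Shi93}), and your iteration is the standard argument for this Stampacchia-type lemma, so there is nothing to compare beyond noting agreement.
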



\begin{thebibliography}{99}

\bibitem{beretta1995nonnegative}
\newblock E. Beretta, M. Bertsch, and R. Dal Passo.
\newblock Nonnegative solutions of a fourth-order nonlinear degenerate parabolic equation,
\newblock \emph{ Archive for rational mechanics and analysis},  {129}(2): 175--200, 1995.

\bibitem{bernis1990higher}
\newblock F. Bernis, A. Friedman.
\newblock Higher order nonlinear degenerate parabolic equations,
\newblock \emph{  J. Differential Equations},  {83}(1): 179--206, 1990.

\bibitem{B6}
F. Bernis.
\newblock Finite speed of propagation and continuity of the interface for thin viscous flows.
\newblock {\em Adv. Differential Equations}, 1(3): 337--368, 1996.

\bibitem{B7}
F. Bernis.
\newblock Finite speed of propagation for thin viscous flows when {$2\leq
  n<3$}.
\newblock {\em Comptes Rendus de l'Acad\'emie des Sciences. S\'erie I.
              Math\'ematique}, 322(12): 1169--1174, 1996.

\bibitem{BPW-92}
F. Bernis, L.A. Peletier and S. M. Williams.
\newblock Source type solutions of a fourth order nonlinear degenerate parabolic equation.
\newblock {\em Nonlinear Anal.}, 18: 217--234, 1992.


\bibitem{bertozzi1994singularities}
\newblock Andrea L. Bertozzi  et al.
\newblock  \emph{Singularities and similarities in interface flows,}
\newblock Trends and perspectives in applied mathematics. Springer New York, 155--208, 1994.



%


\bibitem{Ch10}
\newblock Marina Chugunova, Mary C. Pugh, and Roman M. Taranets.
\newblock Nonnegative solutions for a long-wave unstable
   thin film equation with convection,
\newblock  \emph{ SIAM Journal on Mathematical Analysis},  42(4): 1826--1853, 2010.

\bibitem{CT-12}
\newblock Marina Chugunova, and Roman M. Taranets.
\newblock Qualitative analysis of coating flows on a rotating horizontal cylinder,
\newblock  \emph{ International Journal of Differential Equations},  2012:
 Article ID 570283, 30 pages, 2012.

\bibitem{friedman1958interior}
\newblock Avner Friedman.
\newblock  Interior estimates for parabolic systems of partial differential equations,
\newblock  \emph{ J. Math. Mech.}, {7}(3): 393--417, 1958.

\bibitem{HulShish}
Josephus Hulshof, and Andrey E. Shishkov.
\newblock The thin film equation with {$2\leq n<3$}: finite speed of
              propagation in terms of the {$L^1$}-norm.
\newblock {\em Adv. Differential Equations}, 3(5):625--642, 1998.

\bibitem{kangcoatingsphere}
\newblock D. Kang,  A. Nadim, and M. Chugunova.
\newblock  Dynamics and equilibria of thin viscous coating films on a rotating sphere,
\newblock  \emph{ Journal of Fluid Mechanics},  791: 495--518, 2016. 

\bibitem{KNC-17}
\newblock D. Kang,  A. Nadim, and M. Chugunova.
\newblock  Marangoni effects on a thin liquid film coating a sphere with axial or radial thermal
gradients,
\newblock  \emph{ Physics of Fluids},  29: 072106-1--072106-15, 2017.

\bibitem{KSZ-16}
\newblock D. Kang,  Tharathep Sangsawang and Jialun Zhang.
\newblock  Weak solution of a doubly degenerate parabolic equation,
\newblock  \emph{  arXiv:1610.06303v2},  2017.



\bibitem{N1}
L.~Nirenberg.
\newblock An extended interpolation inequality.
\newblock {\em Ann. Scuola Norm. Sup. Pisa (3)}, 20: 733--737, 1966.

\bibitem{Shi93}
A.~E.~Shishkov.
\newblock Dynamics of the geometry of the support of the generalized
solution of a higher-order quasilinear parabolic equation in divergence form,
\newblock {\em Differ. Uravn.}, 29(3): 537--547, 1993.

\bibitem{Tar-17}
Roman M. Taranets.
\newblock  Strong solutions of the thin film equation in spherical geometry.
\newblock {\em arXiv:1709.10496}, 2017.

\bibitem{TH-10}
\newblock D. Takagi, and Herbert E. Huppert.
\newblock  Flow and instability of thin films on a cylinder and sphere,
\newblock  \emph{ Journal of Fluid Mechanics},  647: 221--238, 2010.


\bibitem{Wil-94}
\newblock S.K. Wilson.
\newblock  The onset of steady Marangoni convection in a spherical geometry,
\newblock  \emph{ Journal of Engineering Mathematics},  {28}: 427--445, 1994.



\end{thebibliography}
\end{document}